\renewcommand\eqref[1]{(\ref{#1})} 
\newcommand{\Q}{\mathbb{Q}}
\newcommand{\R}{\mathbb{R}}
\title[Boundedness of spectral multipliers]{Boundedness of spectral multipliers on locally compact groups and applications}
\author[S. G\'omez Cobos]{Santiago G\'omez Cobos}
\address{
	Santiago G\'omez Cobos:
	\endgraf
	Department of Mathematics: Analysis, Logic and Discrete Mathematics
	\endgraf
	Ghent University, Krijgslaan 281, Building S8, B 9000 Ghent
	\endgraf
	Belgium
	\endgraf
	{\it E-mail address} {\rm davidsantiago.gomezcobos@ugent.be}}
\author[J. E. Restrepo]{Joel E. Restrepo}
\address{
	Joel E. Restrepo:
	\endgraf
	Department of Mathematics
	\endgraf
Center for Research and Advanced Studies 
\endgraf
Av. IPN 2508, 07360, CDMX 
    \endgraf
	Mexico
	\endgraf
	{\it E-mail address} {\rm joel.restrepo@cinvestav.mx}}
\author[M. Ruzhansky]{Michael Ruzhansky}
\address{
	Michael Ruzhansky:
	\endgraf
	Department of Mathematics: Analysis, Logic and Discrete Mathematics
	\endgraf
	Ghent University, Krijgslaan 281, Building S8, B 9000 Ghent
	\endgraf
	Belgium
	\endgraf
	and
	\endgraf
    School of Mathematical Sciences
    \endgraf
    Queen Mary University of London
    \endgraf
    United Kingdom
    \endgraf
	{\it E-mail address} {\rm michael.ruzhansky@ugent.be}}
\subjclass[2020]{43A15, 43A85, 45K05, 35Q41.}
\keywords{Spectral multipliers, locally compact groups, heat type equations, wave type equations, Schr\"odinger type equations, asymptotic estimates, non-local operators.}
\newtheoremstyle{theorem}
{10pt}          
{10pt}  
{\sl}  
{\parindent}     
{\bf}  
{. }    
{ }    
{}     
\theoremstyle{theorem}
\numberwithin{equation}{section}
\theoremstyle{plain}
\newtheorem{thm}{Theorem}[section]
\newtheorem{lem}[thm]{Lemma}
\theoremstyle{definition}
\newtheorem{defn}[thm]{Definition}
\newtheorem{rem}[thm]{Remark}
\newtheoremstyle{defi}
{10pt}          
{10pt}  
{\rm}  
{\parindent}     
{\bf}  
{. }    
{ }    
{}     
\theoremstyle{defi}
\begin{document}
\begin{abstract}
We prove that the noncommutative Lorentz norm (associated to a semifinite von Neumann algebra) of a propagator of the form $\varphi(|\mathscr{L}|)$ can be estimated if the modulus of the Borel function $\varphi$ is bounded by a continuous positive monotonically decreasing function that vanishes at infinity $\psi$. As a consequence, we obtain the $L^p-L^q$ $(1<p\leqslant 2\leqslant q<+\infty)$ norm estimates for the solutions of heat, wave, and Schr\"odinger type equations (new in this setting) on a locally compact separable unimodular group $G$ by using a non-local integro-differential operator in time and any positive left invariant operator (maybe unbounded and with discrete or continuous spectrum) on $G$. We also provide asymptotic estimates (large-time behavior) for the solutions, which in some cases can be claimed to be sharp. Illustrative examples are given for several groups. 
\end{abstract}
	\maketitle

\section{Introduction}
The regularity of integro-differential equations has been in the sights of mathematicians for a long time not only for its wide range of applications but also for its abstract relevance \cite{ade,pruss,tricomi}. From the abstract point of view one can quickly identify that there are two promising ways to extend this kind of studies, on one hand one can try to study abstract equations on simpler spaces (the basic model is $\R^n$, see e.g. \cite{bre,hor,lit,stri-intro}); or on the other hand one can work with some simpler equations on very general spaces ($C^*$-algebras, symmetric spaces, groups, etc, see e.g. \cite{quantum, cow, giga}). In \cite{RR2020}, Akylzhanov and the third author went one step further in the direction of the second option mentioned above by obtaining regularity in a quite general set up, considering linear evolution equations on locally compact unimodular groups; to describe the generality, the type of groups which can be considered is very wide. Some of them are compact, semisimple, exponential, nilpotent, some solvable ones, real algebraic, and many more. Moreover, the operators which can be involved are very general as well, and in particular one can use the following relevant ones: Laplacians, sublaplacians, (non)-Rockland operators, weighted subcoercive operators, etc. This was done through an application of a spectral multiplier boundedness theorem, which was obtained mainly due to the application of von Neumann algebras theory \cite{von,[46]} and the generalized non-commutative Lorentz spaces \cite{[51]}. Using their result \cite[Theorem 6.1]{RR2020} one can translate the regularity problem to the study of boundedness of propagators of the form $\varphi(|\mathscr{L}|\footnote{We denote by $L=U|L|$ its polar decomposition.})$ for $\varphi$ a continuous positive monotonically decreasing function vanishing at infinity, and if we are provided good asymptotic behavior of the traces of the spectral projections $E_{(s,+\infty)}(|\mathscr{L}|)$ we end up just imposing properties of the function $\varphi$. In 
\cite{RR2020}, the main examples were functions $e^{-tx}$ and $1/(1+x)^{\gamma}$ for $t,x,\gamma>0,$ i.e. boundedness of solutions of the $\mathscr{L}$-heat equation and Sobolev embeddings, respectively.  Obviously, one cannot expect to be able to use any function $\varphi$, and in particular it does not work for $\varphi(x)=e^{it\sqrt{x}}$ with $t,x>0$, which represents the propagator for the wave equation, hence it is not covered by these results as well as the Schrödinger one. Nevertheless, on a compact/graded Lie group in \cite{wagner,joel}, it was shown that for the non-local in time $\mathscr{L}$-heat equation ($\mathscr{L}$-heat type) one can apply the previous reasoning since in those cases the propagator is related to the well-known Mittag-Leffler functions \cite{mittag}. 

Having this in mind, the authors were motivated to extend the class of equations which can be studied in this way, and we managed to obtain the following estimation of the Lorentz norms of a general propagator:  
\begin{thm}\label{additional}
    Let $L$ be a closed (maybe unbounded) operator affiliated\footnote{See the preliminaries for the definition.} with a semifinite von Neuman algebra $M$. Let $\phi:[0,+\infty)\to\mathbb{C}$ be a Borel measurable function. Suppose also that $\psi$ is a monotonically decreasing continuous function on $[0,+\infty)$ such that $\psi(0)=1$, $\lim_{v\to+\infty}\psi(v)=0$ and $|\phi(v)|\leqslant \psi(v)$ for all $v\in[0,+\infty).$ Then for every $1\leqslant r<+\infty$ we have the inequality 
\[
\|\phi(|L|)\|_{L^{r, \infty}(M)}\leqslant \sup_{v>0}\psi(v)\big[\tau(E_{(0,v)}(|L|))\big]^{\frac{1}{r}}.
\] 
\end{thm}
This improvement allows us, at this stage, to treat the $\mathscr{L}$-wave type equation, by this we mean the wave equation but replacing the classical time derivative by the non-local operator $^{C}\partial_{t}^{\beta}w(t,x)$, the so-called Dzhrbashyan--Caputo fractional derivative, defined by $^{C}\partial_{t}^{\beta}w(t,x)=\prescript{RL}{0}I^{2-\beta}\partial_t^{(2)} w(t,x),$ where $\beta\in(1,2)$ and $\prescript{RL}{0}I^{\beta}w(t,x)=\frac1{\Gamma(\beta)}\int_0^t (t-s)^{\beta-1}w(s,x)\,\mathrm{d}s,$ is the Riemann-Liouville fractional integral of order $\beta>0.$ For $\beta=2$ we have the classical partial derivative in time, i.e. $^{C}\partial_{t}^{\beta}w(t,x)=\prescript{RL}{0}I^{0}\partial_t^{(2)} w(t,x)=\partial_t^{(2)} w(t,x)$ since $\prescript{RL}{0}I^{0}$ acts like the identity operator. Notice also that for the range $0<\beta<1$ we have $^{C}\partial_{t}^{\beta}w(t,x)=\prescript{RL}{0}I^{1-\beta}\partial_t w(t,x),$ and $\beta=1$ coincides with the classical partial derivative in time $^{C}\partial_{t}^{\beta}w(t,x)=\partial_t w(t,x).$ Also, the theorem above can be applied for Schr\"odinger type equations, specifically 
\[i\prescript{C}{}\partial_{t}^{\beta}w(t,x)+\mathscr{L}w(t,x)=0,\quad \text{\rm for}\quad 0<\beta<1.\] 
These applications are complemented by the study of the $\mathscr{L}$-heat type equation, i.e.
\[
^{C}\partial_{t}^{\beta}w(t,x)+\mathscr{L}w(t,x)=0,\quad\text{\rm for}\quad 0<\beta<1. 
\]
Notice that for the classical heat equation, the result was already obtained in \cite{RR2020}. 

For the reader's convenience we now plot the propagators (functions) for the wave type equation, which are given in terms of the two parametric Mittag-Leffler function $E_{\alpha,\delta}(-x)$ for $x>0$ (see formula \eqref{bimittag}), so that one can visualize the intrinsic idea of our applications. The functions of the propagators are of the form $E_{\alpha,1}(-x)$ and $E_{\alpha,2}(-x)$ for $1<\alpha<2$, see Section \ref{main-aplications} for full details. The idea is to show a clear picture of the oscillating behavior, for a particular $\alpha$, of these type of functions which are always bounded uniformly by $C/(1+x)$ for some positive constant $C$ (see formula \eqref{uniform-estimate}): 
\begin{center}
    \begin{figure}[ht]
        \centering
        \includegraphics[scale=0.41]{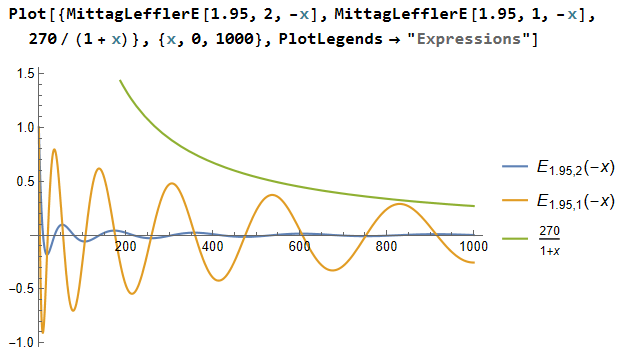}
        \caption{Wave propagators functions for $\alpha=1.95$ bounded uniformly.}
        \label{graficaBOUNDED}
    \end{figure}
\end{center}
In Figure \ref{graficaNOT}, one can also realize that for $\alpha=2$ we have an oscillating behaviour like cosine or sine, and does not vanish at infinity:
\begin{center}
    \begin{figure}[ht]
        \centering
\captionsetup{justification=centering}
        \includegraphics[scale=0.42]{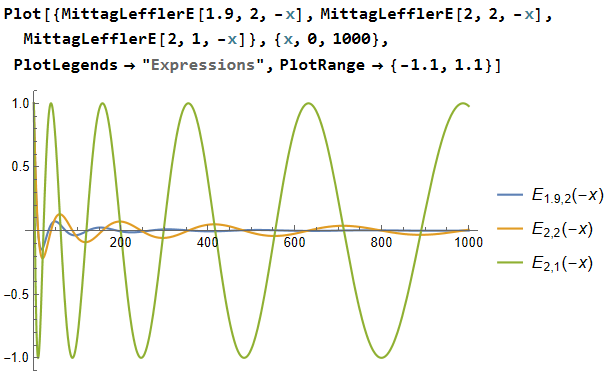}
        \caption{Classical wave propagator function (green) is not uniformily bounded by a decreasing vanishing at infinity function.}
        \label{graficaNOT}
    \end{figure}
\end{center}
Also, we give a complementary perspective of the orders and behaviour of some other type of propagators associated with Mitagg-Leffler functions which can be bounded uniformly:
\begin{center}
    \begin{figure}[ht]
        \centering
        \includegraphics[scale=0.41]{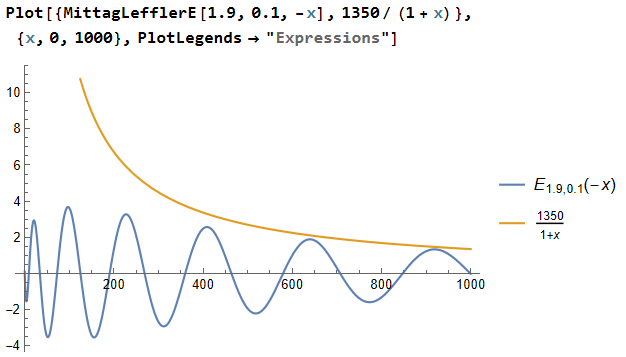}
        \caption{Mittag-Leffler functions with one small parameter.}
        \label{grafica}
    \end{figure}
\end{center}
   Let us now summarize the applications concerning the $L^p-L^q$ $(1<p\leqslant 2\leqslant q<+\infty)$ norm estimates for $\mathscr{L}$-heat-wave-Schr\"odinger type equations. These results are new in this context and open a door to studying different estimates of other type of equations which, in principle, can not be calculated in the classical case but perhaps could be carried out in the set up of non-local operators. Also, we discuss existence, uniqueness and time-decay behaviour of such equations. Full details are given in Section \ref{main-aplications}.

Now we write explicitly the type of equations for which one can apply the main theorem above. 

\medskip{\bf $\mathscr{L}$-heat-wave type equation}: 
\begin{align}
\int_0^t \frac{(t-s)^{n-\beta-1}}{\Gamma(n-\beta)}\partial_{s}^{(n)}w(s,x)\,\mathrm{d}s+\mathscr{L}w(t,x)&=0,\quad t>0,\,\, x\in G, \label{heat-wave-intro}\\
w(t,x)|_{_{_{t=0}}}&=w_0(x),\quad w_0\in L^p(G), \label{111} \\
\partial_t w(t,x)|_{_{_{t=0}}}&=w_1(x),\quad w_1\in L^p(G), \label{222}
\end{align}
where $0<\beta<2$ $(\beta\neq1)$, $n=\lfloor\beta\rfloor+1$ ($\lfloor\cdot\rfloor$ is the floor function), $1<p\leqslant 2,$ $\mathscr{L}$ is any positive linear left invariant operator on $G$ (a locally compact separable unimodular group). If $0<\beta<1$ we just consider the first initial condition $w_0$. While, for $1<\beta<2$, we use both initial conditions $w_0,w_1$. We exclude the value $\beta=1$ from consideration since this case is already known (\cite{RR2020}).

\medskip We also study the {\bf $\mathscr{L}$-Schr\"odinger type equation}: 
\begin{align}
i\int_0^t \frac{(t-s)^{-\beta}}{\Gamma(1-\beta)}\partial_{s}w(s,x)\,\mathrm{d}s+\mathscr{L}w(t,x)&=0,\quad t>0,\,\, x\in G,\,\,0<\beta<1, \label{schrodinger-intro}\\
w(t,x)|_{_{_{t=0}}}&=w_0(x),\quad w_0\in L^p(G). \label{333} 
\end{align}

\begin{thm}
Let $G$ be a locally compact separable unimodular group and let $1<p\leqslant 2\leqslant q<+\infty$. Let $\mathscr{L}$ be any positive left invariant operator on $G$ (maybe unbounded).  
\begin{itemize}
    \item If $0<\beta<1$, $w_0\in L^p(G)$ and  
    \[
\sup_{t>0}\sup_{s>0}\big[\tau\big(E_{(0,s)}(\mathscr{L})\big)\big]^{\frac{1}{p}-\frac{1}{q}}E_\beta(-t^{\beta}s)<+\infty,
\]
then there exists a unique solution $w\in\mathcal{C}\big((0,+\infty) ;L^q(G)\big)$ for the {\bf $\mathscr{L}$-heat type equation} \eqref{heat-wave-intro}-\eqref{111} given by 
\[
w(t,x)=E_\beta(-t^{\beta}\mathscr{L})w_0(x),\quad t>0,\,\,x\in G.
\]
\item If $1<\beta<2$, $w_0,w_1\in L^p(G)$ and 
\begin{equation}\label{need-2-intro}
\sup_{t>0}\sup_{s>0}\frac{\big[\tau\big(E_{(0,s)}(\mathscr{L})\big)\big]^{\frac{1}{p}-\frac{1}{q}}}{1+t^\beta s}<+\infty,   
\end{equation}
then there exists a unique solution $w\in\mathcal{C}\big((0,+\infty) ;L^q(G)\big)$ for the {\bf $\mathscr{L}$-wave type equation} \eqref{heat-wave-intro}-\eqref{222} given by
\[
w(t,x)=E_\beta(-t^{\beta}\mathscr{L})w_0(x)+\prescript{RL}{0}I^{1}_t E_{\beta}(-t^{\beta}\mathscr{L})w_1(x),\quad t>0,\,\,x\in G.
\]
\item If $w_0\in L^p(G)$ and the condition \eqref{need-2-intro} holds with $0<\beta<1$, then there exists a unique solution $w\in\mathcal{C}\big((0,+\infty) ;L^q(G)\big)$ for the {\bf $\mathscr{L}$-Schr\"odinger type equation} \eqref{schrodinger-intro}-\eqref{333} given by 
\[
w(t,x)=E_\beta(it^{\beta}\mathscr{L})w_0(x),\quad t>0,\,\,x\in G.
\]
\end{itemize}

In particular, if    
\[
\tau\big(E_{(0,s)}(\mathscr{L})\big)\lesssim s^{\lambda},\quad s\to+\infty,\quad\text{for some}\quad \lambda>0,
\]
we obtain the following time decay rate for the solutions of $\mathscr{L}$-heat type equation \eqref{heat-wave-intro}-\eqref{111} and $\mathscr{L}$-Sch\"odinger type equation \eqref{schrodinger-intro}-\eqref{333}:  
\[
\|w(t,\cdot)\|_{L^q(G)}\leqslant C_{\beta  ,\lambda,p,q}t^{-\beta\lambda\left(\frac{1}{p}-\frac{1}{q}\right)}\|w_0\|_{L^p(G)},   
\]
and also for the $\mathscr{L}$-wave type equation \eqref{heat-wave-intro}-\eqref{222}:
\[
\|w(t,\cdot)\|_{L^q(G)}\leqslant  C_{\beta,\lambda,p,q}t^{-\beta\lambda\left(\frac{1}{p}-\frac{1}{q}\right)} \big(\|w_0\|_{L^p(G)}+t\|w_1\|_{L^p(G)}\big),   
\]
whenever $\frac{1}{\lambda}\geqslant\frac{1}{p}-\frac{1}{q}.$
\end{thm}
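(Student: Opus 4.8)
The plan is to reduce each of the three Cauchy problems to a family of scalar fractional ordinary differential equations indexed by the spectrum of $\mathscr{L}$, and then to convert the resulting explicit propagators into operator norm bounds from $L^p(G)$ to $L^q(G)$ by feeding them into our Lorentz-norm refinement together with the multiplier theorem \cite[Theorem~6.1]{RR2020}. First I would use the spectral resolution $\mathscr{L}=\int_0^{+\infty} s\,\mathrm{d}E_s$ of the positive (self-adjoint) operator $\mathscr{L}$ to project the equation onto the spectral parameter $s$. For \eqref{heat-wave-intro} the projected problem is the scalar equation $\prescript{C}{}\partial_t^\beta u(t)=-s\,u(t)$ with data $u(0)=\widehat{w_0}$ (and $u'(0)=\widehat{w_1}$ when $1<\beta<2$); its unique solution is $u(t)=E_\beta(-t^\beta s)\widehat{w_0}$ in the heat regime and $u(t)=E_\beta(-t^\beta s)\widehat{w_0}+tE_{\beta,2}(-t^\beta s)\widehat{w_1}$ in the wave regime, where the identity $\prescript{RL}{0}I^1_t E_\beta(-t^\beta s)=tE_{\beta,2}(-t^\beta s)$ follows by term-by-term integration of the Mittag-Leffler series. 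The analogue for \eqref{schrodinger-intro} gives $u(t)=E_\beta(it^\beta s)\widehat{w_0}$. Reassembling over the spectrum yields exactly the stated formulas; that they solve the equations reduces to the scalar relation $\prescript{C}{}\partial_t^\beta E_\beta(-t^\beta s)=-s\,E_\beta(-t^\beta s)$ and its companions, the interchange of $\prescript{C}{}\partial_t^\beta$ with $\int\mathrm{d}E_s$ being justified by dominated convergence once the uniform bounds below are in place.

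The heart of the argument is the estimate from $L^p(G)$ to $L^q(G)$, which I would obtain uniformly in $t$. By \cite[Theorem~6.1]{RR2020} (valid on a locally compact separable unimodular group for $1<p\leqslant 2\leqslant q<+\infty$), for any Borel function $\phi$ one has $\|\phi(\mathscr{L})\|_{L^p(G)\to L^q(G)}\lesssim \|\phi(\mathscr{L})\|_{L^{r,\infty}(\mathrm{VN}(G))}$ with $\tfrac1r=\tfrac1p-\tfrac1q$. It then suffices to dominate each propagator symbol by an admissible majorant and apply the Lorentz-norm estimate stated above. In the heat regime the symbol $\phi(v)=E_\beta(-t^\beta v)$ is itself, for $0<\beta<1$, completely monotone and decreasing from $1$ to $0$, so one takes $\psi=\phi$ directly and obtains $\|w(t,\cdot)\|_{L^q(G)}\lesssim \big(\sup_{v>0}E_\beta(-t^\beta v)[\tau(E_{(0,v)}(\mathscr{L}))]^{1/r}\big)\|w_0\|_{L^p(G)}$, the finiteness hypothesis being precisely that this supremum is controlled. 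In the wave regime ($1<\beta<2$) and the Schr\"odinger regime ($0<\beta<1$, imaginary argument) the symbols $E_\beta(-t^\beta v)$, $E_{\beta,2}(-t^\beta v)$ and $E_\beta(it^\beta v)$ oscillate, but by the uniform Mittag-Leffler estimate \eqref{uniform-estimate} (and its analogue for imaginary argument) each is majorised by $C/(1+t^\beta v)$; writing $\psi(v)=1/(1+t^\beta v)$, so that $\psi(0)=1$, $\psi$ is decreasing and $\psi(+\infty)=0$, and absorbing the constant $C$ reduces everything to the Lorentz-norm estimate under hypothesis \eqref{need-2-intro}, the extra factor $t$ in the $w_1$-term coming from the identity $\prescript{RL}{0}I^1_t E_\beta=tE_{\beta,2}$.

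Uniqueness follows from linearity: a solution with zero data projects to the scalar homogeneous fractional ODE, whose only solution is zero, hence $w\equiv0$; continuity $w\in\mathcal{C}\big([0,+\infty);L^q(G)\big)$ follows from the continuity of $t\mapsto E_\beta(-t^\beta v)$ and dominated convergence against the majorants just constructed. Finally, under $\tau(E_{(0,s)}(\mathscr{L}))\lesssim s^\lambda$ the decay rate is a one-line substitution: replacing $[\tau(E_{(0,v)}(\mathscr{L}))]^{1/r}$ by $v^{\lambda/r}$ and setting $u=t^\beta v$ gives $\sup_{v>0}\psi(v)v^{\lambda/r}=t^{-\beta\lambda/r}\sup_{u>0}\psi_1(u)u^{\lambda/r}$, and the remaining supremum, with $\psi_1(u)=E_\beta(-u)$ or $\psi_1(u)=1/(1+u)$, is finite exactly when $\lambda/r<1$, i.e. $\tfrac1\lambda>\tfrac1p-\tfrac1q$, producing the factor $t^{-\beta\lambda(1/p-1/q)}$ (the wave estimate picking up the additional $t\|w_1\|_{L^p(G)}$ from its second term).

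The main obstacle is the $t$-uniform control in the oscillatory cases: unlike the heat regime there is no monotone symbol, so the whole scheme rests on having the sharp majorant $C/(1+t^\beta v)$ of \eqref{uniform-estimate} at one's disposal and on verifying that the Dzhrbashyan--Caputo derivative genuinely commutes with the spectral integral for these slowly decaying, sign-changing propagators; once this is secured the three cases are handled uniformly by the same reduction.
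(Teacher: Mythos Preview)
Your proposal is essentially the paper's own argument: you pass from $L^p\to L^q$ to the Lorentz norm via the multiplier theorem (note that this is \cite[Theorem~5.1]{RR2020}, not Theorem~6.1), then bound the Lorentz norm using complete monotonicity of $E_\beta(-\cdot)$ in the heat case and the majorant $C/(1+t^\beta v)$ together with the Lorentz-norm refinement (Theorem~\ref{additional}) in the wave and Schr\"odinger cases, and finish with the identical supremum computation for the decay rate. The only real difference is that the paper obtains existence, uniqueness and the explicit propagators by citing the abstract solution-operator theory of Pr\"uss and Bajlekova (Subsection~\ref{fde}) rather than by projecting onto scalar fractional ODEs through the spectral resolution; the latter is heuristically cleaner but would need a density argument to pass from $L^2$ to $L^p$ data, which the abstract Banach-space framework sidesteps.
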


Preceding results are used to give different examples by choosing concrete operators and groups. Moreover, in some occasions, we are able to claim the sharpness of the time-decay rate. See Section \ref{examples-section}.   

\section{Preliminary results}\label{preli}

In this section we recall definitions and some necessary results on integro-differential operators, abstract fractional differential equations and von Neumann algebras, which will appear frequently in this paper. 

\subsection{Abstract integro-differential equations}\label{abstractsection}
Here we recall some concepts with respect to the solution of an evolution integral equation. General and fundamental aspects of this theory can be found in e.g. the books \cite{pruss,tricomi}. For a more specialized treatment of abstract non-local (fractional) differential equations in this framework we can refer to the works \cite{thesis2001,thesis,miller}.  

\subsubsection{Integro-differential operators}

As usual, the Riemann--Liouville fractional integral of order $\beta>0$ (\cite{samko}) is defined by
\[
\prescript{RL}{a}I^{\beta}f(t)=\frac1{\Gamma(\beta)}\int_a^t (t-s)^{\beta-1}f(s)\,\mathrm{d}s,\qquad f\in L^1(a,T),
\]
where $L^1(a,T)$ is the Lebesgue integrable space on $(a,T).$  

\medskip Now we recall the well-known Dzhrbashyan--Caputo fractional derivative:
\begin{equation}\label{caputo-alternative-uso}
\prescript{C}{a}D^{\beta}f(t)=\prescript{RL}{a}I^{n-\beta}f^{(n)}(t),\qquad f\in AC^n[a,T],\quad n=\lfloor\beta\rfloor+1,
\end{equation}
where $AC^n[a,T]$ is the set of functions such that $f^{(n-1)}$ exists and is absolutely continuous on $[a,T].$ 

Below we use another equivalent representation for operator \eqref{caputo-alternative-uso}. The new form of this operator is useful in applications since it can be rewritten utilizing the initial conditions. For this reason, we first need to recall some suitable Sobolev type spaces \cite[Appendix]{sobolev}:  
\[
W^{m,p}(I;X):=\left\{g\bigg/\exists\phi\in L^{p}(I;X);\quad g(t)=\sum_{j=0}^{m-1}a_j \frac{t^{j}}{j!}+\frac{t^{m-1}}{(m-1)!}*\phi(t),\quad t\in I\right\},
\]
where $m\in\mathbb{N}$, $1\leqslant p<+\infty$ and $I$ is an interval in $\mathbb{R}.$ One can see that $a_j=g^{(j)}(0)$ and $\phi(t)=g^{(j)}(t).$ So, if $f\in C^{n-1}(I)$ and $h_{n-\beta}*f\in W^{n,1}(I)$, where $n=\lfloor\beta\rfloor+1$ and
\[
h_\beta(t):=\left\{
\begin{array}{rccl}
\frac{t^{\beta-1}}{\Gamma(\beta)}, & t>0, \\
0,& t\leqslant0,
\end{array}
\right.
\]
then we get
\begin{equation}\label{otra-ini}
\prescript{C}{a}D^{\beta}f(t)=\prescript{RL}{a}D^{\beta}\left(f(t)-\sum_{k=0}^{n-1}\frac{f^{(k)}(a)}{k!}(t-a)^k\right),
\end{equation}
where $\prescript{RL}{a}D^{\beta}f(t)=D^{n}\prescript{RL}{a}I^{n-\beta}f(t)$ is the Riemann--Liouville fractional derivative. For more details, see e.g. \cite[P. 10]{thesis2001}.

\subsubsection{Integro-differential equations}\label{fde} Below we always consider the linear operator $\mathscr{L}$ to be closed and densely defined in a Banach space $X$. We study the following Cauchy problem by means of the Dzhrbashyan-Caputo fractional derivative: 
\begin{equation}\label{abstracte}
\prescript{C}{0}\partial^{\beta}_t w(t)=\mathscr{L}w(t),\quad t>0, 
\end{equation}
under the initial conditions
\begin{equation}\label{ae-conditions}
\frac{\partial^{j}w(t)}{\partial t^{j}}\Big|_{_{_{t=0+}}}=w_j\in X,\quad j=0,1,\ldots,n-1,    
\end{equation}
where $\beta>0$ and $n=1+\lfloor \beta\rfloor$.
\begin{defn}
Let $w$ be a function of the space $C(\mathbb{R}^{+},X)$. It is said that $w$ is a strong solution of equation \eqref{abstracte}-\eqref{ae-conditions} if $w\in C(\mathbb{R}^{+},X)\cap C^{n-1}(\mathbb{R}^{+},X)$, 
\[
\prescript{RL}{0}I^{n-\beta}\left(w(t)-\sum_{j=0}^{n-1}\frac{\partial^{j}w(t)}{\partial t^{j}}\Big|_{_{_{t=0+}}}\frac{t^j}{j!}\right)\in C^{n-1}(\mathbb{R}^{+},X)
\]
and equation \eqref{abstracte}-\eqref{ae-conditions} is satisfied on $(\mathbb{R}^+,X).$
\end{defn}

\begin{defn}
The fractional Cauchy problem \eqref{abstracte}-\eqref{ae-conditions} is said to be well-posed if the following two conditions are satisfied: 
\begin{enumerate}
    \item For any $w_j\in\mathcal{D}(\mathscr{L})$, $j=0,\ldots,n-1$, there exists a unique strong solution $w$ of equation \eqref{abstracte}-\eqref{ae-conditions}.
    \item For all sequences  $\{w_{k,m}\}_{m\geqslant1}\subset\mathcal{D}(\mathscr{L})$ $(k=0,\ldots,n-1)$, such that $w_{k,m}\to0$ as  $m\to+\infty$, imply $w_m\to0$ as $m\to+\infty$ in $X$, uniformly on compact intervals, where $w_m$ is the unique strong solution of equation \eqref{abstracte}-\eqref{ae-conditions} with initial conditions $w_{0,m},w_{1,m},\dots,w_{n-1,m}$.  
\end{enumerate} 
\end{defn}

We now show that the study of the solution of equation \eqref{abstracte}-\eqref{ae-conditions} can be reduced to the investigation of equation \eqref{abstracte} under the following initial conditions: 
\begin{equation}\label{abstractezero}
w_0(t)\Big|_{_{_{t=0+}}}=w_0,\quad \frac{\partial^{j}w(t)}{\partial t^{j}}\Big|_{_{_{t=0+}}}=0,\quad j=1,\ldots,n-1. 
\end{equation}
First of all, by \cite[Formula (1.21)]{thesis2001}, one can see that equation \eqref{abstracte},\eqref{abstractezero} is well-posed if and only if the following Volterra integral equation 
\begin{equation}\label{abstracti}
w(t)=w_0(t)+\prescript{RL}{0}{I}^{\beta}\big(\mathscr{L}w(t)\big),    
\end{equation}
is well-posed in the sense of \cite[Def. 1.2]{pruss}. Hence, we are prepared to define the notion of a solution operator (we always assume exponentially bounded) of equation \eqref{abstracte},\eqref{abstractezero} by means of its equivalent integral equation \eqref{abstracti}.  

\begin{defn}
A family $\{E_\beta(-t^{\beta}\mathscr{L})\}_{t\geqslant0}\subset \mathcal{B}(X)$ is called a solution operator of equation \eqref{abstracte},\eqref{abstractezero} if the following conditions are satisfied:
\begin{enumerate}
    \item $E_\beta(-t^{\beta}\mathscr{L})$ is strongly continuous for $t\geqslant0$ and $E_\beta(0)=I;$
    \item $E_\beta(-t^{\beta}\mathscr{L})\mathcal{D}(\mathscr{L})\subset\mathcal{D}(\mathscr{L})$ and $\mathscr{L}E_\beta(-t^{\beta}\mathscr{L})w=E_\beta(-t^{\beta}\mathscr{L})\mathscr{L}w$ for any $w\in\mathcal{D}(\mathscr{L})$, $t\geqslant0;$
    \item $E_\beta(-t^{\beta}\mathscr{L})w$ is a solution of equation \eqref{abstracti} for any $w\in\mathcal{D}(\mathscr{L})$, $t\geqslant0.$
\end{enumerate}
\end{defn}
By using the same terminology of \cite{pruss}, equation \eqref{abstracte},\eqref{abstractezero} is well-posed if and only if it has a solution operator. Therefore, if $E_{\beta}(-t^{\beta}\mathscr{L})$ is a solution operator of equation \eqref{abstracte},\eqref{abstractezero} then the general equation \eqref{abstracte}-\eqref{ae-conditions} is uniquely solvable by
\[
w(t)=\sum_{j=0}^{n-1}\prescript{RL}{0}I^{j}\big(E_\beta(-t^{\beta}\mathscr{L})\big)w_j,
\]
where $w_j\in\mathcal{D}(\mathscr{L})$, $j=0,1,\ldots,n-1.$ So, it is well-posed. This means that is enough to study equation \eqref{abstracte},\eqref{abstractezero} to know the solution of the general case \eqref{abstracte}-\eqref{ae-conditions}.

The solution operator can be written in an integral form as \cite[Chapters 2 and 3]{thesis2001} (see also \cite[Chapter 2]{pruss} or \cite[Theorem 2.41]{thesis})
\[
E_\beta(-t^{\beta}\mathscr{L})=\frac{1}{2\pi i}\int_{H}e^{\gamma t}\gamma^{\beta-1}(\gamma^{\beta}+\mathscr{L})^{-1}{\rm d}\gamma,\quad t\geqslant0,\quad \beta\in(0,2),
\]
where $H$ is the Hankel path of \cite[Formula (2.5)]{thesis} and it is contained in the resolvent set $\rho(-\mathscr{L})$. The above notation is consistent (convenient) since the integral representation of the Mittag-Leffler function  $E_{\alpha}(z)=\sum_{k=0}^{+\infty}\frac{z^k}{\Gamma(\alpha k+1)}$ for $z\in\mathbb{C}$ and $\Re(\alpha)>0$ is given by   
\[
E_{\alpha}(z)=\frac{1}{2\pi i}\int_{\mathcal{H}}e^{\gamma}\gamma^{\alpha-1}(\gamma^{\alpha}-z)^{-1}{\rm d}\gamma,
\]
where $\mathcal{H}$ is a suitable Hankel path \cite[Formula (3.4.12)]{mittag}.   

In this paper, the two-parametric Mittag-Leffler function will appear frequently:
\begin{equation}\label{bimittag}
E_{\alpha,\delta}(z)=\sum_{k=0}^{+\infty} \frac{z^k}{\Gamma(\alpha k+\delta)},\quad z,\delta\in\mathbb{C},\quad \text{\rm Re}(\alpha)>0,
\end{equation}
which is absolutely and locally uniformly convergent for the given parameters (\cite{mittag}).

Also, we need the following particular estimate from  \cite[Theorem 1.6]{page 35}: 
\begin{equation}\label{uniform-estimate}
|E_{\alpha,\delta}(z)|\leqslant \frac{C}{1+|z|},\quad z\in\mathbb{C},\,\,\delta\in\mathbb{R},\,\, \alpha<2,
\end{equation}
where $\mu\leqslant |\arg(z)|\leqslant \pi$, $\pi\alpha/2<\mu<\min\{\pi,\pi \alpha\}$ and $C$ is a positive constant.

\subsection{Von Neumann algebras} The starting point of this algebras can be found in the well-known papers of von Neumann in \cite{[46],[47]}, where one builds the mathematical foundations for studying quantum mechanics. Let $\mathfrak{L}(\mathcal{H})$ be the set of linear operators defined on a Hilbert space $\mathcal{H}$. Notice that the concept of $\tau$-measurability on a von Neumann algebra $M$ and the technique of spectral projections allow us to approximate unbounded operators by bounded ones. 

For our study we just need to set $M$ to be the right group von Neumann algebra $VN_R(G)$ with $G$ being a locally compact separable unimodular group. The latter assumption gives the possibility to use the notion of noncommutative Lorentz spaces on $M$ as given in \cite{[51]}. For more specific details on von Neumann algebras we refer \cite{von,von2}.  

It is important to recall that the group von Neumann algebra $VN_R(G)$ is generated by all the right actions of $G$ on $L^2(G)$ ($\pi_R(g)f(x)=f(xg)$ with $g\in G$), which means that $VN_R(G)=\{\pi_R(G)\}^{!!}_{g\in G}$, where $!!$ is the bicommutant of the subalgebra $\{\pi_R(g)\}_{g\in G}\subset \mathfrak{L}(L^2(G))$. The latter result is a consequence of the fact \cite{von}: $VN_R(G)^{!}=VN_L(G)$ and $VN_L(G)^{!}=VN_R(G)$, where the symbol $!$ represents the commutant of the group. 

Before recalling the noncommutative Lorentz spaces we need the following definitions.  

\begin{defn}
Let $M\subset\mathfrak{L}(\mathcal{H})$ be a semifinite von Neumann algebra acting over the Hilbert space $\mathcal{H}$ with a trace $\tau$. A linear operator $L$ (maybe unbounded) is said to be affiliated with $M$, if it commutes with the elements of the commutant $M^!$ of $M$, which means $LV=VL$ for all $V\in M^!.$
\end{defn}

We highlight that in this paper we will consider operators affiliated with $VN_R(G)$ (see \cite[Def. 2.1]{RR2020}), which are precisely those ones who are left invariant on $G$ \cite[Remark 2.17]{RR2020}.

\begin{rem}
If $L$ is an affiliated bounded operator with $M$, then $L\in M$ by the double commutant theorem. 
\end{rem}

So, the next definitions are given for the above type of operators.   

\begin{defn}
Let $M$ be a von Neumann algebra. A trace of the positive part $M_+=\{L\in M: L^*=L>0\}$ of $M$ is a functional defined on $M_+$, taking non-negative, possibly infinite, real values, with the following properties:
\begin{enumerate}
    \item If $L\in M_+$ and $T\in M_+$ then $\tau(L+T)=\tau(L)+\tau(T)$;
    \item If $L\in M_+$ and $\gamma\in\mathbb{R}^{+}$ then $\tau(\gamma L)=\gamma\tau(L)$ (with $0\cdot+\infty=0$);
    \item If $L\in M_+$ and $U$ is an unitary operator on $M$ then $\tau(ULU^{-1})=\tau(L).$
\end{enumerate}
\end{defn}

A trace $\tau$ is faithful (or exact) if $\tau(L)=0$ $(L\in M_+)$ implies that $L=0$. We have that $\tau$ is finite if $\tau(L)<+\infty$ for all $L\in M_+$. We have that $\tau$ is semifinite if, for each $L\in M_+$, $\tau(L)$ is the supremum of the numbers $\tau(T)$ over those $T\in M_+$ such that $L\leqslant T$ and $\tau(T)<+\infty.$  

\begin{defn}
A closeable operator $L$ (maybe unbounded) is called  $\tau$-measureable if for each $\epsilon>0$ there exists a projection $P$ in $M$ such that $P(\mathcal{H})\subset D(L)$ and $\tau(I-P)\leqslant \epsilon$, where $D(L)$ is the domain of $L$ in $\mathcal{H}$ and $M\subset\mathfrak{L}(\mathcal{H})$. We denote by $S(M)$ the set of all $\tau$-measurable operators.   
\end{defn}

\begin{defn}\label{generalTValues}
Let us take an operator $L\in S(M)$ and let $L = U|L|$ be its polar decomposition. We define the distribution function by $d_\gamma(L):=\tau\big(E_{(\gamma,+\infty)}(|L|)\big)$ for $\gamma\geqslant0$, where $E_{(\gamma,+\infty)}(|L|)$ is the spectral projection of $|L|$ over the interval $(\gamma,+\infty).$ Also, for any $t>0$, we define the generalized $t-$th singular numbers as 
\[
\mu_t(L):=\inf\{\gamma\geqslant0:\,d_\gamma(L)\leqslant t\}.
\]
\end{defn}
For more details and properties of the distribution function and generalized singular numbers, we recommend \cite{pacific}. 

Below we recall the noncommutative Lorentz spaces associated with a semifinite von Neumann algebra $M$, which are a noncommutative extension of the classical Lorentz spaces \cite{[51]}. After that, we provide a useful result that will help us to prove the main result in the next section.

\begin{defn}
We denote by $L^{p,q}(M)$ $(1\leqslant p<+\infty,\,1\leqslant q<+\infty)$ the set of all operators $L\in S(M)$ such that
\begin{align*}
\|L\|_{L^{p,q}(M)}=\left(\int_0^{+\infty}\big(t^{1/p}\mu_t(L)\big)^q \frac{{\rm d}t}{t}\right)^{1/q}<+\infty.    
\end{align*}
Notice that the $L^{p}$-spaces on $M$ can be defined by
\[
\|L\|_{L^{p}(M)}:=\|L\|_{L^{p,p}(M)}=\left(\int_0^{+\infty}\mu_t(L)^p{\rm d}t\right)^{1/p}.
\]
In the case $q=\infty$, $L^{p,\infty}(M)$ is the set of all operators $L\in S(M)$ such that 
\[
\|L\|_{L^{p,\infty}(M)}=\sup_{t>0}t^{1/p}\mu_t(L).
\]
\end{defn}
As announced in the introduction, we are interested in the Lorentz norms of spectral multipliers of the form $\phi(|L|)$, where $L$ is an affiliated operator. A minor technical point requires attention: we want to allow the function $\phi$ to be complex-valued. For this reason, we state the following lemma concerning the polar decomposition of such spectral multipliers. This result is fundamental for the considerations introduced in Definition~\ref{generalTValues}.

\begin{lem}\label{AbsValSpecMult}
   Let $L$ be a closed (possibly unbounded) operator affiliated with a semifinite von Neumann algebra $M$, and let $\phi : [0,+\infty) \to \mathbb{C}$ be a Borel measurable function. Then the positive part of the operator $\phi(|L|)$ appearing in its polar decomposition is obtained via the Borel functional calculus as
\[
|\phi(|L|)|=\widetilde{\phi}(|L|),
\]
where $\widetilde{\phi}$ is the function defined by $\widetilde{\phi}(s)=|\phi(s)| = (|\cdot|\circ \phi)(s)$.
\end{lem}
\begin{proof}
First, note that if $T$ is an operator affiliated with a semifinite von Neumann algebra $M$, then its positive part is defined by $|T| = \sqrt{T^*T},$ where $T^*$ denotes the adjoint of $T$. In general, when dealing with products of spectral multipliers associated with different functions, one must carefully consider domain issues. In the present setting, however, this difficulty does not arise, since we work directly with a spectral multiplier and its adjoint. Indeed, by \cite[Theorem 13.24 (5), p.~362]{rudin}, we obtain the identities
\[
\phi(|L|)^*\,\phi(|L|) \;=\; \phi(|L|)\,\phi(|L|)^* \;=\; |\phi|^2(|L|).
\]

Therefore, the positive part of the spectral multiplier $\phi(|L|)$, which we temporarily denote by $P$, satisfies
\begin{align*}
P
    &= \sqrt{[\phi(|L|)]^*\phi(|L|)}= \sqrt{|\phi|^2(|L|)}.
\end{align*}
Moreover, the Borel functional calculus is compatible with the composition of functions (see \cite[Corollary 5.6.29, p.~363]{composition}), hence
\[
P = \widetilde{\phi}(|L|) = |\phi(|L|)|,
\]
where $\widetilde{\phi}(s)=|\phi(s)| = (|\cdot|\circ \phi)(s)$. This completes the proof.    
\end{proof}

\section{Main result and some applications}\label{main-aplications}

In this section, we prove the main result of the paper (Theorem~\ref{additional}) and present several applications to well-known integral equations. More precisely, we show that the function defining the spectral multiplier need not itself satisfy the decay conditions. It is sufficient that the modulus of this function be bounded by another function that does satisfy the required assumptions.

\begin{proof}[Proof of Theorem \ref{additional}]
By the definition of Lorentz spaces and \cite[Proposition 2.9]{RR2020}, we have
\begin{equation}
\label{lorentz def}
\|\phi(|L|)\|_{L^{r,\infty}(M)}
    = \sup_{s>0} s \left[\tau\!\left(E_{(s,+\infty)}\big(|\phi(|L|)|\big)\right)\right]^{\frac{1}{r}},
\end{equation}
where $|\phi(|L|)|$ denotes the positive part of the operator $\phi(|L|)$ arising from its polar decomposition. By Lemma~\ref{AbsValSpecMult}, this positive part coincides with the spectral multiplier $\widetilde{\phi}(|L|)$ associated with the function $\widetilde{\phi}(s)=|\phi(s)|.$ The Borel functional calculus \cite{BorelFunctional} allows us to express the spectral projections as follows
\[
E_{(s,+\infty)}(|L|) = \int_0^{+\infty} \chi_{(s,+\infty)}(t)\,{\rm d}E_t(|L|)= \chi_{(s,+\infty)}(|L|),
\]
therefore, by \cite[Corollary 5.6.29, p. 363]{composition}
\[
E_{(s,+\infty)}(\Tilde{\phi}(|L|)) = \chi_{(s,+\infty)}(\Tilde{\phi}(|L|)) = (\chi_{(s,+\infty)}\circ \Tilde{\phi})(|L|) = \int_0^{+\infty} \chi_{(s,+\infty)}(\Tilde{\phi}(t))\,{\rm d}E_t(|L|). 
\]
Using \cite[Proposition 2.6]{RR2020} (see also \cite{pacific}) we are able to calculate the trace of this operator, namely
\[
\tau(E_{(s,+\infty)}(\Tilde{\phi}(|L|))) = \int_0^{+\infty} \chi_{(s,+\infty)}(\Tilde{\phi}(t))\,{\rm d}\mu_t,
\]
where $\mu_t = \tau(E_t(|L|))$ is a well-defined measure. Since $\Tilde{\phi}(v) =|\phi(v)|\leqslant \psi(v)$ for all $v\in[0,+\infty)$, we have that $\chi_{(s,+\infty)}(\Tilde{\phi}(v))\leqslant \chi_{(s,+\infty)}(\psi(v))$ for all $v\in[0,+\infty)$. As a consequence of the monotonicity of $\mu_t$ we get 
\begin{align}
\label{monotonicity}
\tau(E_{(s,+\infty)}(\Tilde{\phi}(|L|)))&\leqslant \int_0^{+\infty} \chi_{(s,+\infty)}(\psi(t))\,{\rm d}\mu_t =\tau(E_{(s,+\infty)}(\psi(|L |))).
\end{align}
Putting \eqref{lorentz def} and \eqref{monotonicity} together we obtain
\[
\|\phi(|L|)\|_{L^{r, \infty}(M)} \leqslant \sup_{s>0}s\left[\tau(E_{(s,+\infty)}(\psi(|L|)))\right]^{\frac{1}{r}}.
\]
From the hypothesis given on $\psi$, the proof can be completed by mimicking the proof of \cite[Theorem 6.1]{RR2020}.
\end{proof}

\begin{rem}
The condition $\psi(0)=1$ in Theorem \ref{additional} can be rescaled, so we can prove the same result if one has $\psi(0)=\kappa_0>0.$ 
\end{rem}

In the next subsections, we establish for heat, wave and Schr\"odinger type equations the $L^p-L^q$ norm estimates for $1<p\leqslant 2\leqslant q<+\infty$. Indeed, we will see that the norm estimates can be reduced to the time asymptotic of the propagator (some of them oscillatory) in the noncommutative Lorentz space norm. Also, we show that the latter norm mainly involves calculating the trace of the spectral projections of the operator $\mathscr{L}$.

\medskip In each equation below we assume that $G$ is a locally compact separable unimodular group, $\mathscr{L}$ is any positive linear left invariant operator on $G$ (maybe unbounded) and $^{C}\partial_{t}^{\beta}$ is the Dzhrbashyan-Caputo fractional derivative of order $\beta$ from \eqref{caputo-alternative-uso}.

\subsection{\texorpdfstring{$\mathscr{L}$}{L}-heat type equation} We consider the following heat type equation: 
\begin{equation}\label{heatlocally}
\begin{split}
^{C}\partial_{t}^{\beta}w(t,x)+\mathscr{L}w(t,x)&=0, \quad t>0,\,\, x\in G,\,\,0<\beta<1, \\
w(t,x)|_{_{_{t=0}}}&=w_0(x).
\end{split}
\end{equation}
Our next result also covers the case $\beta=1$, i.e. the classical $\mathscr{L}$-heat equation, but it is already known \cite[Section 7]{RR2020}. 

\begin{thm}\label{heatlocally-thm}
Let $G$ be a locally compact separable unimodular group, $0<\beta<1$ and $1<p\leqslant 2\leqslant q<+\infty$. Let $\mathscr{L}$ be any positive left invariant operator on $G$ (maybe unbounded) such that
\begin{equation}\label{need}
\sup_{t>0}\sup_{s>0}\big[\tau\big(E_{(0,s)}(\mathscr{L})\big)\big]^{\frac{1}{p}-\frac{1}{q}}E_\beta(-t^{\beta}s)<+\infty.   \end{equation}
If $w_0\in L^p(G)$ then there exists a unique solution $w\in \mathcal{C}\big((0,+\infty) ;L^q(G)\big) $ for the $\mathscr{L}$-heat type equation \eqref{heatlocally} represented by
\[
w(t,x)=E_\beta(-t^{\beta}\mathscr{L})w_0(x),\quad t>0,\,\,x\in G.
\]
In the particular case that  
\begin{equation}\label{tracecondition}
\tau\big(E_{(0,s)}(\mathscr{L})\big)\lesssim s^{\lambda},\quad s\to+\infty,\quad\text{for some}\quad \lambda>0,
\end{equation}
we get the following time decay rate for the solution of equation \eqref{heatlocally} for all $t>0,$
\[
\|w(t,\cdot)\|_{L^q(G)}\leqslant C_{\beta  ,\lambda,p,q}t^{-\beta\lambda\left(\frac{1}{p}-\frac{1}{q}\right)}\|w_0\|_{L^p(G)},\quad \frac{1}{\lambda}\geqslant\frac{1}{p}-\frac{1}{q}.    
\]
\end{thm}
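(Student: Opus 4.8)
The plan is to reduce the problem to the abstract solution-operator framework of Section~\ref{abstractsection} for the representation and uniqueness, and then to derive the $L^p$--$L^q$ estimate by combining the spectral multiplier theorem of \cite[Theorem 6.1]{RR2020} with Theorem~\ref{additional}. Throughout write $M=VN_R(G)$ with its canonical trace $\tau$, and set $\frac1r=\frac1p-\frac1q$, which lies in $(0,1)$ since $1<p\leqslant2\leqslant q<+\infty$.

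First I would treat the representation and well-posedness. Since $0<\beta<1$ we have $n=\lfloor\beta\rfloor+1=1$, so \eqref{heatlocally} carries the single initial datum $w_0$ and falls exactly under the scheme \eqref{abstracte},\eqref{abstractezero} with the closed densely defined operator $\mathscr{L}$. By the theory recalled in Section~\ref{abstractsection}, the equation is well-posed and admits the solution operator $\{E_\beta(-t^\beta\mathscr{L})\}_{t\geqslant0}$, so that for $w_0\in\mathcal{D}(\mathscr{L})$ the unique strong solution is $w(t,\cdot)=E_\beta(-t^\beta\mathscr{L})w_0$, strongly continuous in $t$; uniqueness for the class under consideration is inherited from the same abstract result.

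Second, and this is the analytic core, I would prove the a~priori bound $\|E_\beta(-t^\beta\mathscr{L})w_0\|_{L^q(G)}\leqslant C\|w_0\|_{L^p(G)}$ with $C$ independent of $t>0$. Since $\mathscr{L}$ is left invariant it is affiliated with $M$, hence $E_\beta(-t^\beta\mathscr{L})$ is a spectral multiplier of $\mathscr{L}$ and \cite[Theorem 6.1]{RR2020} yields
\[
\|E_\beta(-t^\beta\mathscr{L})\|_{L^p(G)\to L^q(G)}\lesssim \|E_\beta(-t^\beta\mathscr{L})\|_{L^{r,\infty}(M)},\qquad \tfrac1r=\tfrac1p-\tfrac1q.
\]
Now fix $t>0$ and apply Theorem~\ref{additional} with $\phi(v)=\psi(v)=E_\beta(-t^\beta v)$: because $0<\beta<1$, the function $x\mapsto E_\beta(-x)$ is completely monotonic on $[0,+\infty)$ (see \cite{mittag}), in particular continuous and monotonically decreasing with $E_\beta(0)=1$ and $E_\beta(-x)\to0$ as $x\to+\infty$, so the hypotheses of Theorem~\ref{additional} are met. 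This gives
\[
\|E_\beta(-t^\beta\mathscr{L})\|_{L^{r,\infty}(M)}\leqslant \sup_{v>0}E_\beta(-t^\beta v)\big[\tau\big(E_{(0,v)}(\mathscr{L})\big)\big]^{\frac1p-\frac1q},
\]
whose right-hand side is finite and bounded uniformly in $t$ precisely by hypothesis \eqref{need}. Extending from $\mathcal{D}(\mathscr{L})$ to all $w_0\in L^p(G)$ is then routine: the bound is uniform in $t$, so approximating $w_0$ in $L^p(G)$ produces solutions converging uniformly in $L^q(G)$, and membership in $\mathcal{C}([0,+\infty);L^q(G))$ is preserved under such uniform limits.

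Finally, for the decay rate I would insert \eqref{tracecondition} into the last supremum. Writing $\theta=\lambda\big(\frac1p-\frac1q\big)$, the hypothesis $\frac1\lambda>\frac1p-\frac1q$ is equivalent to $\theta<1$, and the substitution $u=t^\beta v$ gives
\[
\sup_{v>0}E_\beta(-t^\beta v)\,v^{\theta}=t^{-\beta\theta}\sup_{u>0}E_\beta(-u)\,u^{\theta}.
\]
The remaining supremum is finite because $E_\beta(-u)\leqslant C/(1+u)$ by \eqref{uniform-estimate}, so that $u^{\theta}E_\beta(-u)$ vanishes both as $u\to0$ and, since $\theta<1$, as $u\to+\infty$; this produces the factor $t^{-\beta\lambda(\frac1p-\frac1q)}$ and hence the claimed time-decay estimate. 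I expect the main obstacle to be not any single computation but the careful reconciliation of the abstract $X$-valued solution-operator theory with the $L^p\to L^q$ smoothing: one must ensure that the density and uniform-limit argument genuinely delivers a solution lying in $\mathcal{C}([0,+\infty);L^q(G))$ and that, for smooth data, it coincides with the strong solution furnished by Section~\ref{abstractsection}.
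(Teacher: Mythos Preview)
Your proposal is correct and follows essentially the same route as the paper: representation via the abstract solution-operator theory, then the $L^p\!\to\!L^q$ bound reduced to the Lorentz norm of the propagator, then the Lorentz norm evaluated via the spectral projections using the complete monotonicity of $E_\beta(-\cdot)$ (Pollard), and finally the decay rate from the estimate $E_\beta(-u)\leqslant C/(1+u)$. The only slip is bibliographic: the inequality $\|E_\beta(-t^\beta\mathscr{L})\|_{L^p\to L^q}\lesssim\|E_\beta(-t^\beta\mathscr{L})\|_{L^{r,\infty}(M)}$ is \cite[Theorem~5.1]{RR2020}, not Theorem~6.1; the paper then applies \cite[Theorem~6.1]{RR2020} directly---equivalently, your use of Theorem~\ref{additional} with $\phi=\psi$---to turn the Lorentz norm into the supremum over spectral projections.
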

\begin{proof}
Since the operator $\mathscr{L}$ is positive we have that the solution operator of equation \eqref{heatlocally} is given by $w(t,x)=E_\beta(-t^\beta \mathscr{L})w_0(x)$ \cite[Chapter 3]{thesis} (see also \cite[Prop. 3.8 and Def. 2.3]{thesis2001}). So, by \cite[Theorem 5.1]{RR2020}, we have
\begin{equation}\label{previous-10}
\|w(t,\cdot)\|_{L^q(G)}=\|E_\beta(-t^{\beta}\mathscr{L})w_0\|_{L^q(G)}\lesssim \|E_\beta(-t^\beta \mathscr{L})\|_{L^{r,\infty}(VN_R(G))}\|w_0\|_{L^p(G)},
\end{equation}
with $\frac{1}{r}=\frac{1}{p}
-\frac{1}{q}$, where the above Lorentzian norm is calculated as \cite[Theorem 6.1]{RR2020}:
\[
\|E_\beta(-t^\beta \mathscr{L})\|_{L^{r,\infty}(VN_R(G))}=\sup_{s>0}\big[\tau\big(E_{(0,s)}(\mathscr{L})\big)\big]^{\frac{1}{r}}E_\beta(-t^{\beta}s).
\]
In the above result, we are using that $E_\beta(-t^{\beta}s)$ $(t,s>0)$ is completely monotonic for all $0<\beta\leqslant1$ (\cite{Pollard}) such that $E_\beta(0)=1$ and $\lim_{s\to+\infty}E_\beta(-t^{\beta}s)=0$ by the uniform estimate \eqref{uniform-estimate}. Thus, by \eqref{tracecondition} and again estimate \eqref{uniform-estimate} we get
\[
\|E_\beta(-t^\beta \mathscr{L})\|_{L^{r,\infty}(VN_R(G))}\lesssim \sup_{s>0}\frac{s^{\frac{\lambda}{r}}}{1+t^\beta s}. 
\]
Notice first that for $\lambda/r=1$, the supremum is bounded by $t^{-\beta}$. Also, the above supremum is attained at $s=\frac{\lambda t^{-\beta}}{r-\lambda}$ whenever $\frac{1}{\lambda}>\frac{1}{p}-\frac{1}{q}$, hence
\begin{equation}\label{asymtotic-heat}
\|E_\beta(-t^\beta \mathscr{L})\|_{L^{r,\infty}(VN_R(G))}\leqslant C_{\beta,\lambda,p,q}t^{-\beta \lambda/r}, 
\end{equation}
and then the result follows by \eqref{previous-10}.
\end{proof}

\subsection{\texorpdfstring{$\mathscr{L}$}{L}-wave type equation}\label{wave-locally}

Here we investigate the solution of the following equation, which in a sense interpolates between wave (without being wave, $\beta<2$) and heat types:  
\begin{equation}\label{locallywave}
\begin{split}
^{C}\partial_{t}^{\beta}w(t,x)+\mathscr{L}w(t,x)&=0, \quad t>0,\,\, x\in G,\,\,1<\beta<2, \\
w(t,x)|_{_{_{t=0}}}&=w_0(x), \\
\partial_t w(t,x)|_{_{_{t=0}}}&=w_1(x).
\end{split}
\end{equation}

\begin{thm}\label{locally-wave-thm}
Let $G$ be a locally compact separable unimodular group, $1<\beta<2$ and $1<p\leqslant 2\leqslant q<+\infty$. Let $\mathscr{L}$ be any positive left invariant operator on $G$ (maybe unbounded) satisfying the following condition 
\begin{equation}\label{need-2}
\sup_{t>0}\sup_{s>0}\frac{\big[\tau\big(E_{(0,s)}(\mathscr{L})\big)\big]^{\frac{1}{p}-\frac{1}{q}}}{1+t^\beta s}<+\infty.   
\end{equation}
If $w_0,w_1\in L^p(G)$ then there exists a unique solution $w\in\mathcal{C}\big((0,+\infty);L^q(G)\big)$ for the $\mathscr{L}$-wave type equation \eqref{locallywave} given by
\[
w(t,x)=E_\beta(-t^{\beta}\mathscr{L})w_0(x)+\prescript{RL}{0}I^{1}_t E_{\beta}(-t^{\beta}\mathscr{L})w_1(x),\quad t>0,\,\,x\in G.
\]
In particular, if the condition \eqref{tracecondition} holds then for any $1<p\leqslant 2\leqslant q<+\infty$ such that $\frac{1}{\lambda}\geqslant\frac{1}{p}-\frac{1}{q}$ we get the following time decay rate for the solution of equation \eqref{locallywave}: 
\[
\|w(t,\cdot)\|_{L^q(G)}\leqslant  C_{\beta,\lambda,p,q}t^{-\beta\lambda\left(\frac{1}{p}-\frac{1}{q}\right)} \big(\|w_0\|_{L^p(G)}+t\|w_1\|_{L^p(G)}\big).    
\]
\end{thm}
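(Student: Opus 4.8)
The plan is to follow the scheme of the $\mathscr{L}$-heat type case (Theorem \ref{heatlocally-thm}), the essential new feature being that for $1<\beta<2$ the propagators oscillate and are no longer completely monotonic, so the Lorentz-norm computation of \cite[Theorem 6.1]{RR2020} must be replaced by Theorem \ref{additional}. First I would fix the solution representation. Since $1<\beta<2$ gives $n=\lfloor\beta\rfloor+1=2$, the abstract theory recalled in Subsection \ref{abstractsection} produces, once $\mathscr{L}$ (being positive) generates the solution operator $E_\beta(-t^\beta\mathscr{L})$, the unique solution $w(t)=E_\beta(-t^{\beta}\mathscr{L})w_0+\prescript{RL}{0}I^{1}_t E_\beta(-t^{\beta}\mathscr{L})w_1$. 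Integrating the Mittag--Leffler series term by term and using $\Gamma(\beta k+1)(\beta k+1)=\Gamma(\beta k+2)$, I would rewrite the second summand as
\[
\prescript{RL}{0}I^{1}_t E_\beta(-t^{\beta}\mathscr{L})w_1=t\,E_{\beta,2}(-t^{\beta}\mathscr{L})w_1,
\]
so that the two propagators are precisely the functions $E_{\beta,1}$ and $E_{\beta,2}$ displayed in the introduction.

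Next I would estimate the $L^q$-norm. By the triangle inequality and \cite[Theorem 5.1]{RR2020} with $\frac1r=\frac1p-\frac1q$,
\[
\|w(t,\cdot)\|_{L^q(G)}\lesssim \|E_\beta(-t^{\beta}\mathscr{L})\|_{L^{r,\infty}(VN_R(G))}\|w_0\|_{L^p(G)}+t\,\|E_{\beta,2}(-t^{\beta}\mathscr{L})\|_{L^{r,\infty}(VN_R(G))}\|w_1\|_{L^p(G)},
\]
which reduces everything to the two Lorentz norms. This is the crux: for $\beta>1$ the maps $v\mapsto E_{\beta,\delta}(-t^{\beta}v)$ ($\delta=1,2$) change sign and are not monotone, so \cite[Theorem 6.1]{RR2020} is unavailable. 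Instead I apply Theorem \ref{additional} with $\phi(v)=E_{\beta,\delta}(-t^{\beta}v)$ and the dominating function $\psi(v)=C/(1+t^{\beta}v)$ furnished by the uniform estimate \eqref{uniform-estimate} (legitimate since $\arg(-t^{\beta}v)=\pi$ lies in the admissible sector $\mu\leqslant|\arg z|\leqslant\pi$ for $\alpha=\beta<2$). As $\psi(0)=C$, the rescaling remark following Theorem \ref{additional} applies and yields, for $\delta=1,2$,
\[
\|E_{\beta,\delta}(-t^{\beta}\mathscr{L})\|_{L^{r,\infty}(VN_R(G))}\leqslant \sup_{s>0}\frac{C}{1+t^{\beta}s}\big[\tau\big(E_{(0,s)}(\mathscr{L})\big)\big]^{\frac1r},
\]
which is finite by the hypothesis \eqref{need-2}.

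To obtain the decay rate I would insert the trace bound \eqref{tracecondition} to get $\|E_{\beta,\delta}(-t^{\beta}\mathscr{L})\|_{L^{r,\infty}(VN_R(G))}\lesssim \sup_{s>0} s^{\lambda/r}/(1+t^{\beta}s)$ and then maximize exactly as in the heat case: the supremum is attained at $s=\lambda t^{-\beta}/(r-\lambda)$ provided $\frac1\lambda>\frac1r=\frac1p-\frac1q$, giving a bound $\lesssim t^{-\beta\lambda/r}$ for both $\delta=1,2$. Combining with the factor $t$ in front of the $w_1$-term produces the stated estimate $\|w(t,\cdot)\|_{L^q(G)}\lesssim t^{-\beta\lambda(\frac1p-\frac1q)}\big(\|w_0\|_{L^p(G)}+t\|w_1\|_{L^p(G)}\big)$.

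Finally, existence, uniqueness and the membership $w\in\mathcal{C}\big([0,+\infty);L^q(G)\big)$ follow from the well-posedness and strong continuity of the solution operator recorded in Subsection \ref{abstractsection}, together with a density argument: the uniform-in-$t$ bounds just established allow me to extend $E_{\beta,\delta}(-t^{\beta}\mathscr{L})$ from the dense domain $\mathcal{D}(\mathscr{L})\subset L^p(G)$ to all of $L^p(G)$, with values in $L^q(G)$. The only genuine obstacle is the non-monotonicity of the propagators for $\beta>1$, and it is exactly this that Theorem \ref{additional} is designed to overcome; the remaining steps are routine adaptations of the $\mathscr{L}$-heat argument.
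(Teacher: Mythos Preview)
Your proposal is correct and follows essentially the same route as the paper: represent the solution via $E_{\beta,1}$ and $tE_{\beta,2}$, invoke \cite[Theorem 5.1]{RR2020} to pass to the $L^{r,\infty}(VN_R(G))$ norm, then use Theorem \ref{additional} with the dominating function $\psi(v)=C/(1+t^{\beta}v)$ from \eqref{uniform-estimate} and optimise in $s$ under \eqref{tracecondition}. The only differences are cosmetic---you spell out the admissibility of the sector for \eqref{uniform-estimate}, the rescaling of $\psi(0)$, and the density/continuity argument, all of which the paper leaves implicit.
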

\begin{proof}
We know that the solution operator of equation \eqref{locallywave} can be expressed as (see Subsection \ref{fde}): 
\begin{equation}\label{wave-eq}
w(t,x)=E_\beta(-t^{\beta}\mathscr{L})w_0(x)+\prescript{RL}{0}I^{1}_t E_{\beta}(-t^{\beta}\mathscr{L})w_1(x).
\end{equation}
Notice now that 
\begin{equation}\label{integralmittag}\prescript{RL}{0}I^{1}_t E_{\beta}(-t^{\beta}s)=\sum_{k=0}^{+\infty}\frac{(-s)^{k}}{\Gamma(\beta k+1)}\frac{t^{\beta k+1}}{\beta k+1}=t\sum_{k=0}^{+\infty}\frac{(-st^{\beta})^{k}}{\Gamma(\beta k+2)}=tE_{\beta,2}(-t^{\beta}s). 
\end{equation}
Taking the propagators of equation \eqref{wave-eq},  equality \eqref{integralmittag}, the condition \eqref{need-2}, estimate \eqref{uniform-estimate}, Theorem \ref{additional} and \cite[Theorem 5.1]{RR2020}, we obtain 
\begin{align*}
&\|w(t,\cdot)\|_{L^q(G)}\leqslant \|E_\beta(-t^{\beta}\mathscr{L})w_0\|_{L^q(G)}+\|\prescript{RL}{0}I^{1}_t E_{\beta}(-t^{\beta}\mathscr{L})w_1\|_{L^q(G)} \\
&\lesssim \sup_{v>0}\frac{1}{1+vt^{\beta}}\big(\tau(E_{(0,v)}(\mathscr{L}))\big)^{\frac{1}{p}-\frac{1}{q}} \big(\|w_0\|_{L^p(G)}+t\|w_1\|_{L^p(G)}\big) \\
&\lesssim \sup_{v>0}\frac{v^{\lambda\left(\frac{1}{p}-\frac{1}{q}\right)}}{1+vt^{\beta}}\big(\|w_0\|_{L^p(G)}+t\|w_1\|_{L^p(G)}\big)\lesssim t^{-\beta \lambda\left(\frac{1}{p}-\frac{1}{q}\right)}\big(\|w_0\|_{L^p(G)}+t\|w_1\|_{L^p(G)}\big),
\end{align*}
completing the proof.
\end{proof}

\subsection{\texorpdfstring{$\mathscr{L}$}{L}-Schr\"odinger type equation} Consider the $\mathscr{L}$-Schr\"odinger type equation: 
\begin{align}\label{asterisco}
\begin{split}
i\,^{C}\partial_{t}^{\beta}w(t,x)+\mathscr{L}w(t,x)&=0, \quad t>0,\,\, x\in G,\,\,0<\beta<1, \\
w(t,x)|_{_{_{t=0}}}&=w_0(x).
\end{split}
\end{align}
  
\begin{thm}\label{Schrodinger-locally-thm}
Let $0<\beta<1$ and $1<p\leqslant 2\leqslant q<+\infty$. Suppose that the operator $\mathscr{L}$ satisfies the condition \eqref{need-2} with $0<\beta<1.$ If $w_0\in L^p(G)$ then there exists a unique solution $w\in\mathcal{C}\big((0,+\infty) ;L^q(G)\big)$ for the $\mathscr{L}$-Schr\"odinger type equation \eqref{asterisco} represented by
\[
w(t,x)=E_\beta(it^{\beta}\mathscr{L})w_0(x),\quad t>0,\,\,x\in G.
\]
In particular, if the condition \eqref{tracecondition} holds then for any $1<p\leqslant 2\leqslant q<+\infty$ such that $\frac{1}{\lambda}\geqslant\frac{1}{p}-\frac{1}{q}$ we get the following time decay rate for the solution of equation \eqref{asterisco} for all $t>0,$
\[
\|w(t,\cdot)\|_{L^q(G)}\leqslant C_{\beta  ,\lambda,p,q}t^{-\beta\lambda\left(\frac{1}{p}-\frac{1}{q}\right)}\|w_0\|_{L^p(G)}.    
\]
\end{thm}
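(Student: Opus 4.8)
The plan is to transcribe the arguments of Theorems \ref{heatlocally-thm} and \ref{locally-wave-thm}, the one genuinely new ingredient being that the Schr\"odinger propagator is complex-valued and oscillatory, so the complete-monotonicity route of \cite[Theorem 6.1]{RR2020} is unavailable and Theorem \ref{additional} must be used instead. First I would identify the solution operator. Since multiplication by $-i$ recasts \eqref{asterisco} as $\prescript{C}{}\partial_{t}^{\beta}w+(-i\mathscr{L})w=0$, it is the heat-type problem \eqref{heatlocally} with $\mathscr{L}$ replaced by $-i\mathscr{L}$; as $0<\beta<1$ only the single datum $w_0$ is prescribed, and the associated solution operator from Subsection \ref{fde} is therefore $E_\beta\!\big(-t^\beta(-i\mathscr{L})\big)=E_\beta(it^\beta\mathscr{L})$. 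Thus the candidate solution is $w(t,x)=E_\beta(it^\beta\mathscr{L})w_0(x)$, and the well-posedness theory recalled in Subsection \ref{fde} delivers uniqueness once this propagator is shown to map $L^p(G)$ into $L^q(G)$.

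Next I would pass to the noncommutative Lorentz norm. By \cite[Theorem 5.1]{RR2020},
\[
\|w(t,\cdot)\|_{L^q(G)}\lesssim \|E_\beta(it^\beta\mathscr{L})\|_{L^{r,\infty}(VN_R(G))}\,\|w_0\|_{L^p(G)},\qquad \tfrac1r=\tfrac1p-\tfrac1q.
\]
Here lies the crux: the scalar symbol $s\mapsto E_\beta(it^\beta s)$ takes complex values and oscillates, so it is not completely monotonic and \cite[Theorem 6.1]{RR2020} does not apply. Instead I set $\phi(v):=|E_\beta(it^\beta v)|$; since $E_\beta(it^\beta\mathscr{L})$ is a normal function of the self-adjoint operator $\mathscr{L}$, its singular numbers coincide with those of $\phi(\mathscr{L})$, so the two Lorentz norms agree. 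The uniform estimate \eqref{uniform-estimate} then supplies a majorant: the argument $z=it^\beta s$ lies on the positive imaginary axis, $\arg z=\pi/2$, and because $0<\beta<1$ one has $\pi\beta/2<\pi/2$, so there is an admissible $\mu\in(\pi\beta/2,\min\{\pi\beta,\pi/2\})$ with $\mu\leqslant|\arg z|\leqslant\pi$; hence $\phi(v)\leqslant C/(1+t^\beta v)$ for all $v\geqslant0$.

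Setting $\psi(v):=C/(1+t^\beta v)$, which is continuous, monotonically decreasing and vanishes at infinity, and rescaling the normalization as permitted by the remark following Theorem \ref{additional} (here $\psi(0)=C$), Theorem \ref{additional} yields
\[
\|E_\beta(it^\beta\mathscr{L})\|_{L^{r,\infty}(VN_R(G))}\leqslant \sup_{v>0}\frac{C}{1+t^\beta v}\big[\tau(E_{(0,v)}(\mathscr{L}))\big]^{\frac1r},
\]
which is finite by hypothesis \eqref{need-2} with $\tfrac1r=\tfrac1p-\tfrac1q$; combined with the strong continuity of the solution family this gives existence in $\mathcal{C}\big([0,+\infty);L^q(G)\big)$, and together with the well-posedness above, uniqueness. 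Finally, under the polynomial trace bound \eqref{tracecondition} I would insert $\tau(E_{(0,s)}(\mathscr{L}))\lesssim s^\lambda$ to reduce the right-hand side to $\sup_{v>0} v^{\lambda/r}/(1+t^\beta v)$; this is exactly the one-variable optimization already performed in the heat and wave proofs, whose maximizer is $v=\lambda t^{-\beta}/(r-\lambda)$ precisely when $\lambda/r<1$, i.e. $1/\lambda>1/p-1/q$, and whose value is a constant times $t^{-\beta\lambda/r}$. This produces the asserted decay $\|w(t,\cdot)\|_{L^q(G)}\lesssim t^{-\beta\lambda\left(\frac1p-\frac1q\right)}\|w_0\|_{L^p(G)}$.

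The only delicate step---and the reason the Schr\"odinger result is confined to $0<\beta<1$---is the sector verification for \eqref{uniform-estimate}: the purely imaginary spectral argument $it^\beta s$ must satisfy $|\arg z|=\pi/2\geqslant\mu>\pi\beta/2$, which is possible exactly when $\beta<1$. Everything else is a faithful transcription of the heat and wave arguments, with complete monotonicity replaced by the domination furnished by Theorem \ref{additional}.
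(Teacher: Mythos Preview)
Your proposal is correct and follows essentially the same route as the paper: identify the propagator $E_\beta(it^\beta\mathscr{L})$, apply \cite[Theorem 5.1]{RR2020}, bound the scalar symbol via \eqref{uniform-estimate} by $C/(1+t^\beta s)$, invoke Theorem \ref{additional} with this majorant, and then run the same one-variable optimization as in the heat and wave cases. You add welcome detail the paper omits---the sector check for \eqref{uniform-estimate} that explains the restriction $\beta<1$, and the remark that the Lorentz norm of the normal operator $E_\beta(it^\beta\mathscr{L})$ equals that of $|E_\beta(it^\beta\mathscr{L})|=\phi(\mathscr{L})$---but the argument is otherwise the same.
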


\begin{proof}
Notice that the solution operator of equation \eqref{asterisco} is given by
\begin{equation*}
    w(t,x)=E_\beta(it^\beta \mathscr{L})w_0(x),\quad 0<\beta<1.
\end{equation*}  
Hence, by \cite[Theorem 5.1]{RR2020}  we have
\begin{equation}\label{previous-1}
\|w(t,\cdot)\|_{L^q(G)}=\|E_\beta(it^{\beta}\mathscr{L})w_0\|_{L^q(G)}\lesssim \|E_\beta(it^\beta \mathscr{L})\|_{L^{r,\infty}(VN_R(G))}\|w_0\|_{L^p(G)},
\end{equation}
with $\frac{1}{r}=\frac{1}{p}
-\frac{1}{q}$. Since the argument of the Mittag-Leffler function is a pure imaginary number then we can apply inequality \eqref{uniform-estimate} and get
\[
|E_\beta(it^{\beta}s)|\leqslant \frac{C}{1+t^{\beta}s},\quad s,t\geqslant0,\quad 0<\beta<1. 
\]
In this way, applying Theorem \ref{additional} with  $\psi(s)=C/(1+t^{\beta}s)$ we finally obtain that
\[
\|E_\beta(it^\beta \mathscr{L})\|_{L^{r,\infty}(VN_R(G))}\lesssim \sup_{s>0}\frac{s^{\frac{\lambda}{r}}}{1+t^{\beta}s}\leqslant  C_{\beta,\lambda,p,q}t^{-\beta \lambda/r},\quad \frac{1}{\lambda}\geqslant\frac{1}{p}-\frac{1}{q},
\]
and thus the result follows by \eqref{previous-1}.   
\end{proof}

\begin{rem}
We emphasize that Theorems \ref{locally-wave-thm}, \ref{Schrodinger-locally-thm} do not cover the classical $\mathscr{L}$-wave equation ($\beta=2$) and $\mathscr{L}$-Schr\"odinger equation ($\beta=1$), respectively. For this case one would need a more delicate analysis of the problem, especially focusing on understanding the behavior of the trace of the spectral projection of the Schr\"odinger (wave) propagator $\tau(E_{(\lambda,+\infty)}(e^{it\mathscr{L}}))$. The latter remains an open problem.  
\end{rem}

\section{Examples}\label{examples-section}
In this section we provide some explicit examples to clarify and compare the wide class of groups and operators we can be treated with our results. We do it for the case of heat and wave type equations, but obviously the same examples work for the Schr\"odinger type equations.

Let us consider the following general $\mathfrak{D}$-heat-wave type equation: 
\begin{equation}\label{examples}
\begin{split}
^{C}\partial_{t}^{\alpha}w(t,x)-\mathfrak{D}w(t,x)&=0, \quad t>0,\,\, x\in G, \\
w(t,x)|_{_{_{t=0}}}&=w_0(x),\quad w_0\in L^p(G),  \\
\partial_t w(t,x)|_{_{_{t=0}}}&=w_1(x),\quad w_1\in L^p(G),
\end{split}
\end{equation}
where $0<\alpha<2$ $(\alpha\neq1)$, $1<p\leqslant 2,$ $\mathfrak{D}$ is any positive linear left invariant operator on a locally compact separable unimodular group $G$. We use the first initial condition $w_0$ when $0<\alpha<1$. We consider both initial conditions $w_0,w_1$ when $1<\alpha<2.$  

By Theorem \ref{locally-wave-thm}, we get for any $1<p\leqslant 2\leqslant q<+\infty$ the following time decay rate for the solution of equation \eqref{examples}: 
\begin{equation}\label{solu-2}
\|w(t,\cdot)\|_{L^q(G)}\leqslant C_{\alpha,\nu,Q,p,q}t^{-\alpha\lambda\left(\frac{1}{p}-\frac{1}{q}\right)} \big(\|w_0\|_{L^p(G)}+t\|w_1\|_{L^p(G)}\big),  
\end{equation}
whenever 
\[
\tau\big(E_{(0,s)}(\mathfrak{D})\big)\lesssim s^{\lambda},\quad s\to+\infty,\quad\text{for some}\quad \lambda>0,\quad \frac{1}{\lambda}\geqslant\frac{1}{p}-\frac{1}{q}.
\]
Below we give several examples of equation \eqref{examples} in different {\it Lie} groups and particular operators acting there, whose trace of the spectral projections behave like $s^{\lambda}$ as $s\to+\infty$. Moreover, in some cases, we can claim the sharpness of the time-decay rate.  
\vspace{0.3cm}

\begin{center}
\begin{tabular}{ |c| c | c | c | c |}
    \hline
    \# & \textbf{Group} & \textbf{Operator} $\mathbf{\mathfrak{D}}$ & $\mathbf{\tau\left(E_{(0, s)}(\mathfrak{D})\right)}$ & \textbf{Time Decay} \\ \hline
     1 & The Euclidean space $\R^n$ & $\Delta_{\R^n}$ & $s^{n/2}$ & $t^{-\alpha n/2 \left(\frac{1}{p}-\frac{1}{q}\right)}$   \\ \hline
     2 & Any compact Lie group $G$ & $\Delta_{sub}$ & $s^{Q/2}$ & $t^{-\alpha Q/2 \left(\frac{1}{p}-\frac{1}{q}\right)}$   \\ \hline
     3 & The Heisenberg group $\mathbb{H}^n$&  $\mathscr{L}$ & $s^{n+1}$ & $t^{-\alpha (n+1) \left(\frac{1}{p}-\frac{1}{q}\right)}$  \\ \hline
     4 & Any graded Lie group $G$ &  $\mathcal{R}$ & $s^{Q/\nu}$ & $t^{-\alpha Q/\nu \left(\frac{1}{p}-\frac{1}{q}\right)}$ \\ \hline
     5 & Engel group $\mathfrak{B}_4$ &  $\mathfrak{D_1}$ & $s^{3}$ & $t^{-3\alpha\left(\frac{1}{p}-\frac{1}{q}\right)}$ \\ \hline
     6 & Cartan group $\mathfrak{B}_5$ &  $\mathfrak{D_2}$ & $s^{9/2}$ & $t^{-9\alpha/2 \left(\frac{1}{p}-\frac{1}{q}\right)}$ \\ \hline
     7 & Any connected Lie group $G$ &  $\mathcal{L}$ & $s^{Q_*/m}$ & $t^{-\alpha Q_*/m \left(\frac{1}{p}-\frac{1}{q}\right)}$ \\ \hline
    \end{tabular}
\end{center}
\vspace{0.3cm}

\begin{itemize}
\item[1.] The Laplacian $\Delta_{\R^n}$ on the Euclidean space $\R^n$.  By \cite[Example 7.3]{RR2020}, it follows that
\[
\tau\big(E_{(0,s)}(\Delta_{\R^n})\big)\lesssim s^{n/2},\quad s\to+\infty.
\]
Here we can recover the sharp estimate (time-decay) given in \cite[Theorem 3.3, item (i)]{uno2} whenever $\frac{2}{n}>\frac{1}{p}-\frac{1}{q}$. 

    \item[2.] The sub-Laplacian $\Delta_{sub}$ on a compact Lie group. By \cite{[35]}, we have
\[
\tau\big(E_{(0,s)}(-\Delta_{sub})\big)\lesssim s^{Q/2},\quad s\to+\infty,
\]
where $Q$ is the Hausdorff dimension of $G.$ 

\item[3.] The positive sub-Laplacian on the Heisenberg group $\mathbb{H}^n$. By \cite[Formula (7.17)]{RR2020}, we get
\[
\tau\big(E_{(0,s)}(\mathscr{L})\big)\lesssim s^{n+1},\quad s\to+\infty.
\]

\item[4.] A positive Rockland operator $\mathcal{R}$ on a graded Lie group. By \cite[Theorem 8.2]{david}, we obtain
\[
\tau\big(E_{(0,s)}(\mathcal{R})\big)\lesssim s^{Q/\nu},\quad s\to+\infty,
\]
where $Q$ is the homogeneous dimension of $G$. In this case the estimate is sharp and we can then claim the sharpness of the time-decay rate.

\item[5.] The non-Rockland-type operator $\mathfrak{D_1} = -(X_1^2 + X_2^2 + X_3^2 +X_4^2 +X_4^{-2})$ on the Engel group $\mathfrak{B}_4$, where $\{X_i\}$ are the vector fields that form the canonical basis of its Lie algebra. By \cite[Example 2.2]{Marianna}, we know 
\[
\tau\big(E_{(0,s)}(\mathfrak{D_1})\big) \lesssim s^{3},\quad s\to+\infty.
\]

\item[6.] The non-Rockland-type operator $\mathfrak{D_2} = -(X_1^2 + X_2^2 + X_3^2 +X_4^2 + X_5^2+X_4^{-2}+X_5^{-2})$ on the Cartan group $\mathfrak{B}_5$, where $\{X_i\}$ are the vector fields that form the canonical basis of its Lie algebra. By \cite[Example 3.2]{Marianna}, one has
\[
\tau\big(E_{(0,s)}(\mathfrak{D_1})\big) \lesssim s^{9/2},\quad s\to+\infty.
\]

\item[7.] An $m$-th order weighted subcoercive positive operator on a connected unimodular Lie group. By \cite[Proposition 0.3]{david2}, we arrive at
$$
\tau\left(E_{(0,s)}(\mathcal{L})\right) \lesssim s^{\frac{Q_*}{m}}, \quad s \rightarrow \infty,
$$
where $Q_*$ is the local dimension of $G$ relative to the chosen weighted structure on its Lie algebra.
\end{itemize}
{\bf Example of an abelian locally compact group of non-Lie type.} We conclude this section by presenting in more detail the case of the group of $\rho$-adic numbers, denoted by $\Q_{\rho}$ ($\rho$ is a prime number). Since there is no smooth structure in $\Q_{\rho}$ we can not define a Laplacian-type operator using derivatives, but we can do it as a Fourier multiplier. In fact, we consider the Vladimirov operator \cite{[30]}, an analogous operator to the fractional Laplacian, defined as follows: 
\begin{equation}
    \mathfrak{D}^\mu f(x) = \int_{\Q_{\rho}} e^{2\pi i \{x\xi\}_\rho} |\xi|_\rho^\mu \widehat{f}(\xi)\,{\rm d}\xi,\quad \mu>0,\,\,x\in \Q_\rho,
\end{equation}
where $\widehat{f}$ is the Fourier transform of $f$, $|\cdot|_\rho$ is the $\rho$-adic norm and $\{\cdot\}_\rho$ is the fractional part of a $\rho$-adic number. This means that $\widehat{\mathfrak{D}^\mu f}(\xi) = |\xi|_\rho^\mu\widehat{f}(\xi),$ so that its symbol is given by $\sigma_{\mathfrak{D}^\mu}(\xi)=|\xi|_\rho^\mu$. From the general theory (see \cite[Theorem 5.6.26, p. 360]{composition} and \cite[Corollary 5.6.29, p. 363]{composition}) we get that $E_{(0,s)}\left(\mathfrak{D}^\mu\right) = \chi_{(0,s)}(|\sigma_{\mathfrak{D}^\mu}(\xi)|).$ Therefore the trace of this spectral projection can be directly computed as 
\[
\tau\left(E_{(0,s)}\left(\mathfrak{D}^\mu\right)\right) = \int\limits_{\substack{\Q_\rho \\ |\sigma_{\mathfrak{D}^\mu}(\xi)| \leqslant s}} \,{\rm d}\xi = \int\limits_{\substack{\Q_\rho \\ |\xi|_\rho^\mu \leqslant s}} \,{\rm d}\xi = \operatorname{Vol}(B_{s^{1/\mu}}(0)), 
\]
where as usual $B_r(y)$ denotes the $\rho$-adic ball of radius $r$ and center $y$. Let $v$ be the smallest integer such that $1/\rho^v\sim s^{1/\mu}$, thus by the topology given in $\Q_\rho$ we have
\[
\operatorname{Vol}(B_{s^{1/\mu}}(0)) = \operatorname{Vol}(B_{\rho^{-v}}(0)) = 1/\rho^v,
\]
since this is the normalization of the Haar measure in $\Q_\rho$ (\cite{burba}). Hence 
\[
\tau\left(E_{(0,s)}\left(\mathfrak{D}^\mu\right)\right) = 1/\rho^v \lesssim s^{1/\mu},\quad s \rightarrow \infty.
\]
Finally, we obtain time decay rate $t^{-\alpha/\mu \left(\frac{1}{p}-\frac{1}{q}\right)}$ for the solution of $\mathfrak{D}^\mu$-equation \eqref{examples}, with $\lambda=1/\mu$ in \eqref{solu-2}.

\section{Acknowledgements}
The authors were supported by the FWO Odysseus 1 grant G.0H94.18N: Analysis and Partial Differential Equations, the Methusalem programme of the Ghent University Special Research Fund (BOF) (Grant number 01M01021). MR is also supported by EPSRC grant EP/R003025/2 and FWO Senior Research Grant G011522N.

\section*{Data availability} No new data was collected or generated during the course of this research.

\section*{Declarations} 
{\bf Conflict of interest.} The authors declare no competing interests.

\end{document}